\newcommand{\floor}[1]{\lfloor #1 \rfloor}
\newcommand{\ID}{{\mathbb D}}
\newcommand{\IR}{{\mathbb R}}
\newtheorem{theorem}{Theorem}
\newtheorem{lemma}{Lemma}
\theoremstyle{definition}
\begin{document}
\title[Complex symmetric  weighted Composition Differentiation Operators]%
{Complex symmetric  weighted Composition Differentiation Operators}

\author{Junming Liu, Saminathan Ponnusamy,  Huayou Xie*}
\thanks{*Corresponding author}

\address{School of Applied Mathematics, Guangdong University of Technology, Guangzhou, Guangdong, 510520, P.~R.~China}\email{jmliu@gdut.edu.cn}

\address{S. Ponnusamy, Department of Mathematics, Indian Institute of Technology Madras, Chennai-600 036, India.} \email{samy@iitm.ac.in}

\address{School of Applied Mathematics, Guangdong University of Technology, Guangzhou, Guangdong, 510520, P.~R.~China}\email{1425100934@qq.com}

\begin{abstract}
In this note,  we completely characterize complex symmetric weighted composition differentiation operator  on the Hardy space $H^2$ with
respect to the conjugation operator $C_{\lambda,\alpha}$. Meanwhile, the normal and self-adjoint of the weighted composition differentiation
operators on the Hardy space $H^2$ are also studied.  This note could be considered as a continuation of the work initiated by Fatehi and Hammond.
\end{abstract}

\thanks{This work was supported by NNSF of China (Grant No. 11801094).}
\keywords{weighted composition differentiation operator, complex symmetric operator, Hardy space}
\subjclass[2010]{47B38, 30H10,47A05}

\maketitle


\section{Introduction and preparation}
In this paper, $\mathbb{D}$ denotes the open  unit disc $\{z\in \mathbb{C}:\,|z|<1\}$ and $\mathbb{T}$  the unit circle $\{z\in \mathbb{C}:\,|z|=1\}$.
Let $H(\mathbb{D})$ be the Hilbert space of all analytic function on $\mathbb{D}$. The space $H^2$, the Hardy space, is the  set of functions from
$H(\mathbb{D})$ with square summable power series coefficients; that is,  $f\in H(\mathbb{D})$ for which
$$\|f\|_{H^{2}}=\Big(\sum^{\infty}_{n=0}|a_n|^2\Big)^{1/2}<\infty,
$$
where $\{a_{n}\}$ is the sequence of Maclaurin coefficients for $f$.

Given formal power series  $f(z)=\sum^{\infty}_{n=0}a_nz^n$ and $g(z)=\sum^{\infty}_{n=0}b_nz^n$, the inner product on $H^2$ is defined by
$$\langle f,g \rangle =\sum^{\infty}_{n=0}a_n\overline{b_n},
$$
Let $H^{\infty}$ denote the space of bounded analytic functions on $\mathbb{D}$. The norm $\|\,.\, \|_\infty$ of $f\in H^{\infty}$  is defined by
$$\|f\|_{\infty}=\sup\{|f(z)|:z\in \mathbb{D}\}.
$$

For an analytic self-map $\varphi$ of $\mathbb{D}$, we define the composition operator $C_{\varphi}$ on $H(\mathbb{D})$ by
$$(C_{\varphi}f)(z)= (f\circ \varphi)(z) =f(\varphi(z)), \quad z\in \mathbb{D}.
$$
This is  the first setting in which composition operators were studied. By Littlewood's subordination principle, every composition operator
takes $H^2$ into itself. The differentiation operator is defined by $Df=f'$ for each $f\in H(\mathbb{D})$. For $u\in H(\mathbb{D})$, the weighted
composition operator $uC_{\varphi}$ is given by
$$(uC_{\varphi}f)(z)=u(z)f(\varphi(z)), \quad f\in H(\mathbb{D}).
$$

For  $m\in \mathbb{N}$, the weighted differentiation composition operator is denoted by
$$(D^m_{u,\varphi}f)(z)=u(z)f^{(m)}(\varphi(z)), \quad f\in H(\mathbb{D}).
$$
When $m=0$, the operator $D^{m}_{u,\varphi}$ becomes the weighted composition operator $uC_{\varphi}$. If $m=0$ and $u(z)=1$,
we get $D^{m}_{u,\varphi}=C_{\varphi}$. If $m=1$ and $u(z)=1$,  it turns out to be $D^{m}_{u,\varphi}=C_{\varphi}D$. If $m=1$ and $u(z)=\varphi'(z)$,
it yields that $D^{m}_{u,\varphi}=DC_{\varphi}$. In this article, we study the case  $m=1$.  In what follows, we denote
$D_{u,\varphi}^{(1)}$ by $D_{u,\varphi}$ for convenience.

Weighted composition operators have arisen in the study of isometries of Hardy spaces. Later, these have been studied by  many mathematicians.
Recently, many researchers have started investigating weighted composition differentiation operator on various function spaces.
For example,   Ohno \cite{SO} studied the boundedness and compactness of the products of composition and differentiation between Hardy spaces whereas
Li and Stevi\'{c} \cite{LS1,LS2} investigated  the products of composition and differentiation operators between $H^{\infty}$ and Bloch type spaces.
Liang and Zhou studied them on logarithmic Bloch space in \cite{LZ}.  Fatehi and Hammond \cite{F} investigated the adjoint, norm and spectrum of the composition
differentiation operator $D_{\varphi}$  on Hardy spaces. They only considered the case of  $\|\varphi\|_{\infty}<1$. In this case, it could guarantee
that $D_{\varphi}$ is bounded and compact. To ensure that $D_{u,\varphi}$ is bounded and compact on Hardy space, we study the operator $D_{u,\varphi}$  with
$u\in H^{2}$ and $\|\varphi\|_{\infty}<1$.

The reproducing kernel of $H^2$ is
$$K_{w}(z)=\frac{1}{1-\overline{w}z}.
$$
For $z\in \mathbb{D}$, the normalized reproducing kernels of $H^2$ are given by
$$k_{w}(z)=\frac{K_{w}(z)}{\|K_{w}(z)\|}=\frac{(1-|w|^2)^{\frac{1}{2}}}{1-\overline{w}z}, \quad z\in \mathbb{D}.
$$
Then $\langle f(z),K_{w}(z) \rangle =f(w)$ for all $f\in H^2$ and $z\in \mathbb{D}$. Now, we introduce
$$K^{(1)}_{w}(z)=\frac{z}{(1-\overline{w}z)^2}.
$$
Then $K^{(1)}_{w}$ is the reproducing kernel for point-evaluation of the first derivative. In fact,
for all $f\in H^2$, we have
$$\langle f(z),K^{(1)}_{w}(z) \rangle =f'(w)
$$
and it is easy to see that
$$\langle f,D^{\ast}_{u,\varphi}(K_w) \rangle =\langle D_{u,\varphi}(f),K_w \rangle =\overline{u(w)}f'(\varphi(w))
=\langle f,\overline{u(w)}K^{(1)}_{\varphi(w)} \rangle .
$$
Therefore, $D^{\ast}_{u,\varphi}(K_w)=\overline{u(w)}K^{(1)}_{\varphi(w)}$.

A linear operator $C:~H^2\rightarrow H^2$ is a {\it conjugation} if
$$\langle Cf,Cg \rangle =\langle g,f \rangle ,\ \ \mbox{for all }\ f, g\in H^2
$$
and $C^2=I$, where $I$ is identity operator. A bounded operator $T$ on the Hardy space $H^2$ is said to be {\it complex symmetric} if
there is a conjugation $C$ on $H^2$ such that $T=CT^{\ast}C$.

Complex symmetric operator  can be regarded  as a generalization of complex symmetric matrices. It is important to the development of
operator theory. The study of complex symmetric operators was started by  Garcia and  Putinar \cite{G,GS}, Garcia and Poore \cite{GP},
and Garcia and  Wogen \cite{GW,GWR}.
Recently, the study of complex symmetric weighted composition operator on different spaces  has attracted  the interest of many researchers.
In $2018$, Lim and Khoi \cite{R} studied the weighted composition operator on the Hilbert space $\mathcal{H}_{\gamma}(\mathbb{D})$ of holomorphic
functions, which is complex symmetric operator with the conjugation of the form $\mathcal{A}_{u,v}f=u\cdot\overline{f\circ \overline{v}}$.
And they also obtained the result about the conjugation $\mathcal{A}_{u,v}$.  Hu et al.  \cite{HYZ} investigated complex symmetric
weighted composition operator on Dirichlet spaces and Hardy spaces.
Wang and Yao \cite{WY}, and Wang and Han \cite{W}  studied complex symmetry of weighted composition operators in several variables.
Hai and Khoi \cite{HP,HV} characterized  complex symmetry of weighted composition operators on the Fock space.
 For further details, we suggest the readers to refer the articles \cite{GZ,HP1,R,NST,Waleed-2017}.

In this paper, we consider the problem of describing all complex symmetric weighted composition differentiation operator on the
Hardy space $H^2$ with the conjugation $C_{\lambda,\alpha}$. The normal and self adjoint properties of weighted composition differentiation
operator is also discussed.

\section{Complex symmetric operator}

For $\lambda,\alpha\in \mathbb{T}$, we define the conjugate linear operator $C_{\lambda,\alpha}$ on the Hardy space $H^2$ by
$$C_{\lambda,\alpha}f(z)=\lambda \overline{f( \overline{\alpha z})},
$$
where $f\in H^2$. It is easy to see that $C_{\lambda,\alpha}$ is a conjugation. For $\alpha=1$, we denote $C_{\lambda}$ by
$$C_{\lambda}f(z)=\lambda \overline{f(\overline{z})}.
$$
In this section, we will characterize complex symmetric  weighted composition differentiation operator on the Hardy space $H^2$.

\begin{theorem}
Suppose that   $\varphi(z)$  is an analytic self-map  on $\mathbb{D}$ such that $\|\varphi\|_{\infty}<1$, and $u\in H^{2}$ with $u\neq 0$. Then $D_{u,\varphi}$ is a complex symmetric operator on $H^2$ with the conjugation $C_{\lambda,\alpha}$ if and only if
there are complex numbers $a,b,c $ such that
$$u(z)=\frac{az}{(1-\alpha bz)^2} ~\mbox{ and }~  \varphi(z)=b+\frac{cz}{1-\alpha bz}, \ \mbox{for all}\  z\in \mathbb{D}.
$$
\end{theorem}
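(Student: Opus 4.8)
The plan is to reduce the complex-symmetry condition to a single functional identity on reproducing kernels and then solve that identity. Since $C_{\lambda,\alpha}^2=I$, the relation $D_{u,\varphi}=C_{\lambda,\alpha}D_{u,\varphi}^{\ast}C_{\lambda,\alpha}$ is equivalent to $D_{u,\varphi}C_{\lambda,\alpha}=C_{\lambda,\alpha}D_{u,\varphi}^{\ast}$. First I would record the elementary computations $C_{\lambda,\alpha}K_{w}=\lambda K_{\overline{\alpha w}}$ and $C_{\lambda,\alpha}K^{(1)}_{v}(z)=\lambda\,\alpha z/(1-\alpha v z)^2$, together with the adjoint formula $D^{\ast}_{u,\varphi}K_{w}=\overline{u(w)}K^{(1)}_{\varphi(w)}$ supplied in the introduction and the direct evaluation $D_{u,\varphi}K_{\overline{\alpha w}}(z)=u(z)\,\alpha w/(1-\alpha w\varphi(z))^2$. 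Applying $D_{u,\varphi}C_{\lambda,\alpha}=C_{\lambda,\alpha}D_{u,\varphi}^{\ast}$ to each $K_{w}$ and cancelling the unimodular factors $\lambda\alpha$ then yields the master identity
\begin{equation}
\frac{w\,u(z)}{(1-\alpha w\varphi(z))^2}=\frac{z\,u(w)}{(1-\alpha\varphi(w)z)^2},\qquad z,w\in\mathbb{D}. \tag{$\star$}
\end{equation}
Because the linear span of $\{K_{w}\}$ is dense in $H^{2}$ and both $D_{u,\varphi}C_{\lambda,\alpha}$ and $C_{\lambda,\alpha}D_{u,\varphi}^{\ast}$ are bounded conjugate-linear maps, $(\star)$ is equivalent to complex symmetry, so the whole theorem reduces to characterizing the pairs $(u,\varphi)$ satisfying $(\star)$.

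For the necessity direction I would first set $z=0$ in $(\star)$: the right-hand side vanishes while the left equals $w\,u(0)/(1-\alpha b w)^2$ with $b:=\varphi(0)$, forcing $u(0)=0$. Hence $g(z):=u(z)/z$ is analytic, and clearing $zw$ recasts $(\star)$ as $g(z)(1-\alpha\varphi(w)z)^2=g(w)(1-\alpha w\varphi(z))^2$. Putting $z=0$ in this gives $g(w)=g(0)/(1-\alpha b w)^2$, that is $u(z)=az/(1-\alpha b z)^2$ with $a:=g(0)=u'(0)$; since $u\not\equiv0$ we must have $a\neq0$. Substituting this $u$ back into $(\star)$ and cancelling $a$ reduces the identity to $(1-\alpha b z)^2(1-\alpha w\varphi(z))^2=(1-\alpha b w)^2(1-\alpha\varphi(w)z)^2$. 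Taking square roots and fixing the sign by evaluating at $z=w=0$, where both sides equal $1$, gives $(1-\alpha b z)(1-\alpha w\varphi(z))=(1-\alpha b w)(1-\alpha\varphi(w)z)$; solving this relation, which is linear in $\varphi(z)$ for fixed $w$, by isolating $\varphi(z)$ and using that the result must be independent of $w$, produces $\varphi(z)=(b+\kappa z)/(1-\alpha b z)$ for a constant $\kappa$, i.e. $\varphi(z)=b+cz/(1-\alpha b z)$ after setting $c:=\kappa+\alpha b^{2}$.

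The converse is then a direct verification: with $\varphi(z)=(b+\kappa z)/(1-\alpha b z)$ one computes $1-\alpha w\varphi(z)=N(z,w)/(1-\alpha b z)$ and $1-\alpha\varphi(w)z=N(z,w)/(1-\alpha b w)$ with the \emph{same} symmetric numerator $N(z,w)=1-\alpha b(z+w)-\alpha\kappa zw$; feeding these together with $u(z)=az/(1-\alpha b z)^2$ into the two sides of $(\star)$ collapses each to $awz/N(z,w)^{2}$, so $(\star)$ holds and $D_{u,\varphi}$ is complex symmetric. I expect the only delicate points to be bookkeeping rather than conceptual: justifying the passage from the kernel identity to operator equality via density and boundedness (where $u\in H^{2}$ and $\|\varphi\|_{\infty}<1$ are used to guarantee $D_{u,\varphi}$ is bounded and the adjoint formula valid), and correctly fixing the square-root branch and sign in the step that extracts $\varphi$ from the squared identity. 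Everything else is a manipulation that is symmetric in $(z,w)$.
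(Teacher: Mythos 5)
Your proposal is correct, and its skeleton coincides with the paper's: reduce $C_{\lambda,\alpha}$-symmetry to the kernel identity $(\star)$ (the paper's equation (2), up to the harmless factor $\lambda\alpha$), extract first $u$ and then $\varphi$ from it, and verify the converse by exhibiting the common symmetric denominator $N(z,w)=1-\alpha b(z+w)-\alpha\kappa zw$. Where you genuinely diverge is in how the functional equation is solved. The paper expands $u$ in a power series, differentiates the identity with respect to $w$, and evaluates at $w=0$ to get $u(z)=az/(1-\alpha bz)^2$; it then substitutes back, differentiates once more in $w$, and evaluates at $w=0$ to get $\varphi$. You instead factor $u(z)=zg(z)$ (legitimate once $u(0)=0$ is known), cancel $zw$, and read off $g$ by setting $z=0$ --- no series and no differentiation --- and you obtain $\varphi$ by taking a square root of the squared identity and then solving the bilinear relation $w\,h(z)=z\,h(w)$, where $h(z)=\varphi(z)(1-\alpha bz)-b$, which forces $h(z)=\kappa z$. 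Your route is algebraically cleaner, and it also makes explicit a point the paper leaves implicit: that the kernel identity is equivalent to the operator identity (density of the kernels, plus boundedness and conjugate-linearity of both $D_{u,\varphi}C_{\lambda,\alpha}$ and $C_{\lambda,\alpha}D_{u,\varphi}^{\ast}$). The one step in your argument that needs care is the square root: pointwise you only get $F=\pm G$, so the sign must be pinned globally. This is fine either via the identity theorem on $\mathbb{D}^2$ (from $(F-G)(F+G)\equiv 0$ one factor vanishes identically, and $F(0,0)+G(0,0)=2$ rules out the second), or, more simply, by fixing $w$ and noting $F(0,w)=G(0,w)=1-\alpha bw\neq 0$, so the sign is $+$ for every $w$; your evaluation at the origin is exactly this idea, just stated tersely. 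The paper's double differentiation avoids the branch issue entirely, at the cost of messier computations. Both arguments land on the same answer: your $\kappa$ equals the paper's $c-\alpha b^{2}$, i.e., $c=\varphi'(0)=\kappa+\alpha b^{2}$.
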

\begin{proof}
Suppose that  $D_{u,\varphi}$ is $C_{\lambda,\alpha}$-symmetric. Then
\begin{equation}\label{LPX-eq1}
D_{u,\varphi}C_{\lambda,\alpha}K_{w}(z)=C_{\lambda,\alpha}D^{*}_{u,\varphi}K_{w}(z)
\end{equation}
for all $w,z\in\mathbb{D}$. Thus,
\begin{align*}
D_{u,\varphi}C_{\lambda,\alpha}K_{w}(z)
=D_{u,\varphi}C_{\lambda,\alpha}\Big (\frac{1}{1-\overline{w}z}\Big )
=D_{u,\varphi}\Big ( \frac{\lambda}{1-\alpha wz} \Big )
=\frac{\lambda\alpha wu(z)}{(1-\alpha w \varphi(z))^2}
\end{align*}
and
\begin{align*}
C_{\lambda,\alpha}D^{*}_{u,\varphi}K_{w}(z)
=C_{\lambda,\alpha}D^{*}_{u,\varphi} \Big (\frac{1}{1-\overline{w}z} \Big )
=C_{\lambda,\alpha}\Big (\frac{z\overline{u(w)}}{(1-z\overline{\varphi(w)})^2}\Big )
=\frac{\lambda\alpha zu(w)}{(1-\alpha z \varphi(w))^2}.
\end{align*}
In view of \eqref{LPX-eq1}, it follows that
\begin{equation}\label{eq1}
\frac{\lambda\alpha wu(z)}{(1-\alpha w \varphi(z))^2}=\frac{\lambda\alpha zu(w)}{(1-\alpha z \varphi(w))^2}
\end{equation}
for all $w,z\in \mathbb{D}$. Since $\lambda,\alpha\in \mathbb{T}$ and $u\neq 0$, we must have $u(0)=0$.

Now, we set $u(z)=\sum^{\infty}\limits_{n=1}a_{n}z^n$ with $a_{n}\in \mathbb{C}$. Substituting $u(z)$ back into the
equation \eqref{eq1},  we get
$$\Big (\sum^{\infty}_{n=1}a_nwz^n\Big )(1-\alpha z\varphi(w))^2 = \Big (\sum^{\infty}_{n=1}a_n zw^n\Big )(1-\alpha w\varphi(z))^2
$$
for all $z,w\in \mathbb{D}$. Differentiating the above formula with respect to $w$, we obtain
\begin{align*}
\Big (\sum^{\infty}_{n=1}& a_nz^n \Big)(1-\alpha z\varphi(w))^2-2\alpha z\varphi'(w)\Big (\sum^{\infty}_{n=1}a_nwz^n \Big)(1-\alpha w\varphi(w))\\
&= \Big (\sum^{\infty}_{n=1}na_nzw^{n-1}\Big )(1-\alpha w\varphi(z))^2-2\alpha \varphi(z) \Big (\sum^{\infty}_{n=1}a_nzw^n \Big)(1-\alpha w\varphi(z)).
\end{align*}
Let $w=0$ in the above equation.  Then, we have
$$u(z)=\frac{az}{(1-\alpha bz)^2}, \ \mbox{for all}~ z\in \mathbb{D},
$$
where $a=a_1$ and $b=\varphi(0).$
Substituting $u(z)=\frac{az}{(1-\alpha bz)^2}$ back into the equation \eqref{eq1}, we have
$$(1-\alpha bz)^2(1-\alpha w\varphi(z))^2=(1-\alpha bw)^2(1-\alpha z\varphi(w))^2
$$
for all  $z,w\in \mathbb{D}$.  Differentiate both sides of the above equation with respect to $w$, we see that
\begin{align*}
&(1-\alpha bz)^2(1-\alpha w\varphi(z))(-2\alpha \varphi(z)) \\
&=(1-\alpha bw)(1-\alpha z\varphi(w))^2(-2\alpha b) +(1-\alpha z\varphi(w))(1-\alpha bw)^2(-2\alpha z\varphi'(w)).
\end{align*}
Letting $w=0$ in the above formula  shows that
$$\varphi(z)=b+\frac{cz}{1-\alpha bz},\ \mbox{for all}\ z\in \mathbb{D},
$$
where $c=\varphi'(0)$.

Conversely, if $u(z)=\frac{az}{(1-\alpha bz)^2}$ and $\varphi(z)=b+\frac{cz}{1-\alpha bz}$,
then
$$C_{\lambda,\alpha}K_{w}(z)= C_{\lambda,\alpha}\Big (\frac{1}{1-\overline{w}z}\Big )=\frac{\lambda}{1-\alpha wz}
$$
and thus,
\begin{equation}\label{LPX-eq2}
D_{u,\varphi}C_{\lambda,\alpha}K_{w}(z)
=\frac{\lambda \alpha wu(z)}{(1-\alpha w\varphi(z))^2}
=\frac{\lambda\alpha awz}{(1-\alpha bz-\alpha bw+\alpha^2b^2zw-\alpha czw)^2}.
\end{equation}
Again, as
$$ D_{u,\varphi}^{*}K_{w}(z)=D_{u,\varphi}^{*}\Big (\frac{1}{1-\overline{w}z}\Big)=\frac{z\overline{u(w)}}{1-\overline{\varphi(w)}z} ,
$$
it follows that
\begin{equation}\label{LPX-eq3}
C_{\lambda,\alpha}D_{u,\varphi}^{*}K_{w}(z)
=\frac{\lambda \alpha zu(w)}{(1-\alpha z\varphi(w))^2}
=\frac{\lambda\alpha awz}{(1-\alpha bw-\alpha bz+\alpha^2b^2zw-\alpha czw)^2}.
\end{equation}
 Comparing \eqref{LPX-eq2} and \eqref{LPX-eq3} shows that   $D_{u,\varphi}C_{\lambda,\alpha}=C_{\lambda,\alpha}D_{u,\varphi}^{*}$ and
hence, $D_{u,\varphi}$ is $C_{\lambda,\alpha}$-symmetric.
\end{proof}

The above theorem can be seen as a continuation of the work of \cite[Theorem 3.3]{SJ} and \cite[Proposition 2.9]{GH}.

\section{Normal and self-adjoints}
In the section, we study the  adjoint of $D_{u,\varphi}$.
For  $z\in \mathbb{D}$, let
\begin{equation}\label{LPX-eq4}
\varphi(z)=\frac{az+b}{cz+d} ~\mbox{ and }~
\sigma(z)=\frac{\overline{a}z-\overline{c}}{-\overline{b}z+\overline{d}}.
\end{equation}
If $\varphi$ is an analytic self-map of $\ID$, then so does $\sigma$, see \cite{C}.
The following proposition gives the adjoint of the weighted differentiation composition operator.

\begin{lemma}
If  $\varphi$ and $\sigma$ are linear fractional self-maps of $\ID$ as in \eqref{LPX-eq4},
where ${\|\varphi\|}_{\infty}<1$, then $D_{K^{(1)}_{\sigma(0)},\varphi}^{\ast}=D_{K^{(1)}_{\varphi(0)},\sigma}.$
\end{lemma}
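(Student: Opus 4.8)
The plan is to verify the operator identity on the reproducing kernels $\{K_w : w\in\mathbb{D}\}$, whose linear span is dense in $H^2$. Since $D_{K^{(1)}_{\sigma(0)},\varphi}$ is bounded (as $K^{(1)}_{\sigma(0)}\in H^2$ and $\|\varphi\|_\infty<1$), its adjoint is bounded, so it suffices to show that the two operators agree on every $K_w$; agreement on the dense span then forces the claimed equality (and, incidentally, yields boundedness of $D_{K^{(1)}_{\varphi(0)},\sigma}$). For the left-hand side I would invoke the adjoint formula $D^{*}_{u,\varphi}(K_w)=\overline{u(w)}\,K^{(1)}_{\varphi(w)}$ established in Section 1, taken with $u=K^{(1)}_{\sigma(0)}$, giving
$$D^{*}_{K^{(1)}_{\sigma(0)},\varphi}(K_w)(z)=\overline{K^{(1)}_{\sigma(0)}(w)}\,K^{(1)}_{\varphi(w)}(z)=\frac{\overline{w}}{(1-\sigma(0)\overline{w})^2}\cdot\frac{z}{(1-\overline{\varphi(w)}\,z)^2}.$$

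For the right-hand side I would apply the operator directly to $K_w$ using the definition $(D_{u,\psi}f)(z)=u(z)f'(\psi(z))$ with $u=K^{(1)}_{\varphi(0)}$, $\psi=\sigma$ and $f=K_w$. Since $K_w'(\zeta)=\overline{w}/(1-\overline{w}\zeta)^2$, this produces
$$D_{K^{(1)}_{\varphi(0)},\sigma}(K_w)(z)=\frac{z}{(1-\overline{\varphi(0)}\,z)^2}\cdot\frac{\overline{w}}{(1-\overline{w}\,\sigma(z))^2}.$$
Both expressions carry the common factor $\overline{w}z$, so the desired equality reduces to checking that the two remaining denominators coincide, and upon squaring it is enough to prove the single linear identity
$$(1-\sigma(0)\overline{w})(1-\overline{\varphi(w)}\,z)=(1-\overline{\varphi(0)}\,z)(1-\overline{w}\,\sigma(z)).$$

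The crux is this last identity, and here I would substitute the explicit forms $\varphi(z)=(az+b)/(cz+d)$ and $\sigma(z)=(\overline{a}z-\overline{c})/(-\overline{b}z+\overline{d})$ from \eqref{LPX-eq4}, so that $\overline{\varphi(0)}=\overline{b}/\overline{d}$, $\sigma(0)=-\overline{c}/\overline{d}$, and $\overline{\varphi(w)}=(\overline{a}\,\overline{w}+\overline{b})/(\overline{c}\,\overline{w}+\overline{d})$. On the left side the factor $\overline{c}\,\overline{w}+\overline{d}$ from the denominator of $\overline{\varphi(w)}$ cancels against the numerator $\overline{d}+\overline{c}\,\overline{w}$ coming from $1-\sigma(0)\overline{w}$; symmetrically, on the right side the factor $-\overline{b}z+\overline{d}$ from $\sigma(z)$ cancels against the numerator $\overline{d}-\overline{b}z$ coming from $1-\overline{\varphi(0)}z$. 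Both sides then collapse to the common value $(\overline{d}+\overline{c}\,\overline{w}-\overline{a}\,\overline{w}z-\overline{b}z)/\overline{d}$, which establishes the identity. This algebraic verification, which hinges precisely on the way $\sigma$ is assembled from the conjugated coefficients of $\varphi$, is the main (though routine) obstacle; the density and boundedness bookkeeping described above is then immediate.
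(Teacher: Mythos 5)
Your proposal is correct and follows essentially the same route as the paper: both verify the identity on the reproducing kernels $K_w$, compute the left side via the adjoint formula $D^{*}_{u,\varphi}K_w=\overline{u(w)}\,K^{(1)}_{\varphi(w)}$ with $u=K^{(1)}_{\sigma(0)}$ and the right side directly from the definition, and then check that the two rational expressions agree. The only difference is that you carry out the algebraic simplification explicitly (reducing to the linear identity $(1-\sigma(0)\overline{w})(1-\overline{\varphi(w)}z)=(1-\overline{\varphi(0)}z)(1-\overline{w}\sigma(z))$ and cancelling), and you record the density/boundedness bookkeeping, both of which the paper leaves implicit.
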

\begin{proof}
For $z\in \mathbb{D}$, it is easy to obtain that
$$K^{(1)}_{\varphi(0)}(z)=\frac{\overline{d}^2z}{(\overline{d}-\overline{b}z)^2} ~\mbox{ and }~
K^{(1)}_{\sigma(0)}(z)=\frac{d^2z}{(d+cz)^2}.
$$
By simple calculations, we obtain that, for $z,w\in\mathbb{D}$,
\begin{align*}
D_{K^{(1)}_{\varphi(0)},\sigma}K_{w}(z)
=D_{K^{(1)}_{\varphi(0)},\sigma} \Big( \frac{1}{1-\overline{w}z} \Big)
=\frac{\overline{d}^2z}{(\overline{d}-\overline{b}z)^2}\cdot\frac{\overline{w}}{\left(1-\overline{w}\Big (\frac{\overline{a}z-\overline{c}}{-\overline{b}z+\overline{d}}\Big)\right)^2}
\end{align*}
and
\begin{align*}
D_{K^{(1)}_{\sigma(0)},\varphi}^{\ast}K_{w}(z)
=D_{K^{(1)}_{\sigma(0)},\varphi}^{\ast}\Big (\frac{1}{1-\overline{w}z} \Big)
=\frac{\overline{d}^2\overline{w}}{(\overline{d}+\overline{cw})^2}\cdot\frac{z}{\left(1-z\left(\frac{\overline{aw}+\overline{b}}{\overline{cw}+\overline{d}}\right)\right)^2}
\end{align*}
which upon   simplifications of the right hand side of the last two relations give the desired result.
The proof is complete.
\end{proof}

In the above lemma, we used the weighted composition differentiation  operator to express the adjoint formula given by Fatehi and Hammond \cite{F}.
A bounded linear operator $T$ in  Hardy spaces  is called a normal operator if $T^{\ast}T=TT^{\ast}$.  In the following theorem, we discuss when the
weighted composition differentiation operator is normal.

\begin{theorem}
For  $a,b,c,d\in \mathbb{C}$ such that $ad-bc\neq0$, suppose that $\varphi$ and $\sigma$ are given by \eqref{LPX-eq4},
where ${\|\varphi\|}_{\infty}<1$. Let $u$ be defined as $u=K_{\sigma(0)}^{(1)}$. If $a\overline{b}=-\overline{a}c$ and  $\varphi(0)=\sigma(0)$, then $D_{u,\varphi}$ is normal.
\end{theorem}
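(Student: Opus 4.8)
The plan is to prove the stronger statement that $D_{u,\varphi}$ is self-adjoint, from which normality is immediate. The entry point is the adjoint formula of the previous lemma. Because $\varphi(0)=\sigma(0)$, the two first-derivative reproducing kernels coincide, $K^{(1)}_{\varphi(0)}=K^{(1)}_{\sigma(0)}=u$, so the lemma gives
\begin{equation*}
D_{u,\varphi}^{\ast}=D^{\ast}_{K^{(1)}_{\sigma(0)},\varphi}=D_{K^{(1)}_{\varphi(0)},\sigma}=D_{u,\sigma}.
\end{equation*}
Hence it suffices to establish $D_{u,\sigma}=D_{u,\varphi}$, which (since the weight $u$ is common and not identically zero) is equivalent to the symbol identity $\sigma=\varphi$: testing $D_{u,\varphi}=D_{u,\sigma}$ on $f(z)=z$ is automatic, while on $f(z)=z^{2}/2$ it reads $u\varphi=u\sigma$.

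The core of the argument is therefore the purely algebraic claim $\sigma=\varphi$, and I would prove it by combining the two hypotheses. In the generic range $a\neq0$, $b\neq0$, the relation $a\overline{b}=-\overline{a}c$ together with its conjugate $\overline{a}b=-a\overline{c}$ give $\overline{b}=-\overline{a}c/a$ and $\overline{c}=-\overline{a}b/a$; inserting these into $\sigma(z)=(\overline{a}z-\overline{c})/(-\overline{b}z+\overline{d})$ rewrites its numerator as $(\overline{a}/a)(az+b)$. To turn the denominator into $(\overline{a}/a)(cz+d)$ one needs exactly $a\overline{d}=\overline{a}d$, which I would extract from $\varphi(0)=\sigma(0)$, i.e.\ $b\overline{d}=-d\overline{c}$: substituting $\overline{c}=-\overline{a}b/a$ turns this into $b\overline{d}=\overline{a}bd/a$, and cancelling $b$ yields $a\overline{d}=\overline{a}d$. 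With both substitutions in place, $\sigma(z)=(\overline{a}/a)(az+b)\big/\big((\overline{a}/a)(cz+d)\big)=\varphi(z)$, so $D_{u,\varphi}^{\ast}=D_{u,\varphi}$.

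The step I expect to be the real obstacle is precisely this cancellation, since it is the only place where both hypotheses are used simultaneously and where the degenerate configurations surface. When $a=0$ or $b=0$ the solving-and-cancelling collapses; notably $\varphi(0)=\sigma(0)$ forces $b=0$ to come with $c=0$, leaving $\varphi(z)=(a/d)z$ and $\sigma(z)=(\overline{a}/\overline{d})z$, which generally differ, so self-adjointness can fail even though normality should persist. I would settle this leftover case directly on the monomial basis: since then $u(z)=z$, one computes $D_{u,\varphi}z^{n}=n(a/d)^{n-1}z^{n}$ and $D_{u,\varphi}^{\ast}z^{n}=n\,\overline{(a/d)}^{\,n-1}z^{n}$, so $D_{u,\varphi}$ is diagonal on the basis $\{z^{n}\}$ and hence normal. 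The genuinely delicate part of the write-up is thus the nondegeneracy bookkeeping—tracking when $a$, $b$, $c$, $d$ vanish and checking that $\|\varphi\|_{\infty}<1$ rules out the non-analytic symbols—while the remaining coefficient manipulations are routine.
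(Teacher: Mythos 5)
Your proposal is correct in its main line and takes a genuinely different route from the paper. The paper never isolates self-adjointness at all: it uses the lemma to write $D^{\ast}_{u,\varphi}=D_{K^{(1)}_{\varphi(0)},\sigma}$ and then verifies, by a long reproducing-kernel computation resting on the coefficient identities $t_i=k_i$ extracted from $a\overline{b}=-\overline{a}c$ and $\varphi(0)=\sigma(0)$, that $D^{\ast}_{u,\varphi}D_{u,\varphi}K_w=D_{u,\varphi}D^{\ast}_{u,\varphi}K_w$; this treats all admissible $(a,b,c,d)$ uniformly but is opaque. Your argument instead shows the hypotheses collapse the picture: when $a\neq0$ and $b\neq0$ they force $\sigma=\varphi$ (your cancellation $a\overline{d}=\overline{a}d$ is exactly right), so $D^{\ast}_{u,\varphi}=D_{u,\sigma}=D_{u,\varphi}$ is self-adjoint, while the degenerate case $b=c=0$ gives a diagonal operator on the orthonormal basis $\{z^n\}$, hence normal. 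This is shorter, yields a strictly stronger conclusion in the generic case, and explains precisely where normality without self-adjointness can occur.

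Two points need attention. First, there is a small but real hole in your case analysis: you dispose of $b=0$ but never return to $a=0$ (and $ad-bc\neq0$ forces $b\neq0$ there, so this case is not covered by your generic computation, which divides by $a$). It closes in one line: for $a=0$, cross-multiplication shows $\sigma=\varphi$ is equivalent to $|b|^2=|c|^2$ together with $b\overline{d}=-\overline{c}d$, and both follow from $\varphi(0)=\sigma(0)$ alone (take moduli, using $d\neq0$). Second, be aware that your stronger conclusion contradicts the paper itself, and the error is the paper's, not yours: immediately after this theorem the paper claims that $a=i$, $b=1+i$, $c=1-i$, $d=8i$ gives a normal operator that is \emph{not} self-adjoint. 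Since there $a\neq0$ and $b\neq0$, your argument says it is self-adjoint, and a direct kernel check confirms this: one finds $\sigma=\varphi$, $u(z)=z/(1-\overline{b_0}z)^2$ and $\varphi(z)=b_0+cz/(1-\overline{b_0}z)$ with $b_0=(1-i)/8$ and $c=5/32$ real, for which $D^{\ast}_{u,\varphi}K_w=D_{u,\varphi}K_w$ identically. The discrepancy traces to the paper's self-adjointness theorem, whose necessity direction misplaces conjugates (the self-adjoint family should be $u(z)=az/(1-\overline{b}z)^2$, $\varphi(z)=b+cz/(1-\overline{b}z)$ with $a,c$ real and $b$ complex); your approach, once the $a=0$ case is patched in, is sound.
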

\begin{proof}
To verify that  $D_{u,\varphi}$ is normal, we only need to prove that
$$D_{u,\varphi}^{\ast}D_{u,\varphi}K_{w}(z)=D_{u,\varphi}D_{u,\varphi}^{\ast}K_{w}(z) ~\mbox{ for all $z\in \mathbb{D}$.}
$$
For the convenience, we let
$$\left\{
\begin{aligned}
|a|^2-|b|^2=t_1 \\
b\overline{d}-a\overline{c}=t_2 \\
\overline{a}c-\overline{b}d=t_3 \\
|d|^2-|c|^2=t_4
\end{aligned}
\right.
\ \ \text{ and }\ \
\left\{
\begin{aligned}
|a|^2-|c|^2=k_1 \\
\overline{a}b-\overline{c}d=k_2 \\
c\overline{d}-a\overline{b}=k_3\\
|d|^2-|b|^2=k_4.
\end{aligned}
\right.
$$

If $\varphi(0)=\sigma(0)$, then we can easily obtain that $\overline{b}d=-c\overline{d}$ and $|c|^2=|b|^2$. Since $a\overline{b}=-\overline{a}c$,
it follows easily that $t_i=k_i$   for   $i=1,2,3,4$. By calculation, we find that
\begin{align*}
(\overline{a}z-\overline{c})(\overline{d}-\overline{b}z)(ad-bc)
=&\left [-\overline{ab}z^2+(\overline{ad}+\overline{bc})z-\overline{cd}\right ](ad-bc)  \\
=&\left (|b|^2\overline{a}c-|a|^2\overline{b}d\right )z^2+\left (|ad|^2-\overline{ad}bc+\overline{bc}ad-|bc|^2 \right )z+ |c|^2b\overline{d}-|d|^2a\overline{c}  \\
=&\left (|a|^2c\overline{d}-|c|^2a\overline{b}\right )z^2+\left (|ad|^2-ad\overline{bc}+\overline{ad}bc-|bc|^2\right )z+|d|^2\overline{a}b-|b|^2\overline{c}d  \\
=&\left (\overline{ad}-\overline{bc}\right )acz^2+\left (\overline{ad}-\overline{bc}\right )(ad+bc)z+ \left (\overline{ad}-\overline{bc}\right )bd  \\
=&(az+b)(cz+d)(\overline{ad}-\overline{bc} )
\end{align*}
for all $z\in \mathbb{D}$. Since
$$D_{K^{(1)}_{\sigma(0)},\varphi}^{\ast}=D_{K^{(1)}_{\varphi(0)},\sigma},
$$
we have
\begin{align*}
D_{u,\varphi}^{\ast}D_{u,\varphi}K_{w}(z)&=D_{K^{(1)}_{\varphi(0)},\sigma}D_{K^{(1)}_{\sigma(0)},\varphi}\left(\frac{1}{1-\overline{w}z}\right)  \\
&=D_{K^{(1)}_{\varphi(0)},\sigma}\left(\frac{d^2z\overline{w}}{(d+cz)^2(1-\overline{w}\varphi(z))^2}\right)   \\ &=|d|^4z\overline{w}\cdot\frac{(d-c\sigma(z))}{(\overline{d}-\overline{b}z)^2(d+c\sigma(z))^3(1-\overline{w}\varphi(\sigma(z)))^2}      \\
&\hspace{1cm} +|d|^4z\overline{w}\cdot\frac{2\overline{w}\sigma(z)\varphi'(\sigma(z))}{(\overline{d}-\overline{b}z)^2(d+c\sigma(z))^2(1-\overline{w}\varphi(\sigma(z)))^3}  \\
&=|d|^4z\overline{w}\bigg(\frac{|d|^2+|c|^2-(\overline{a}c+\overline{b}d)z}{(t_3z+t_4)[(t_3z+t_4)-(t_1z+t_2)\overline{w}]^2}      \\
&\hspace{1cm} +\frac{2(\overline{a}z-\overline{c})(\overline{d}-\overline{b}z)(ad-bc)\overline{w}}{(t_3z+t_4)[(t_3z+t_4)-(t_1z+t_2)\overline{w}]^3}\bigg)  \\
&=|d|^4z\overline{w}\bigg(\frac{|d|^2+|b|^2+(a\overline{b}+c\overline{d})z}{(k_3z+k_4)[(k_3z+k_4)-(k_1z+k_2)\overline{w}]^2}      \\
& \hspace{1cm}+\frac{2(az+b)(cz+d)(\overline{ad}-\overline{bc})\overline{w}}{(k_3z+k_4)[(k_3z+k_4)-(k_1z+k_2)\overline{w}]^3}\bigg)  \\
&=|d|^4z\overline{w}\cdot\frac{(\overline{d}+\overline{b}\varphi(z))}{(d+cz)^2(\overline{d}-\overline{b}\varphi(z))^3(1-\overline{w}\sigma(\varphi(z)))^2}           \\
&\hspace{1cm}+|d|^4z\overline{w}\cdot\frac{2\overline{w}\varphi(z)\sigma'(\varphi(z))}{(d+cz)^2(\overline{d}-\overline{b}\varphi(z))^2(1-\overline{w}\sigma(\varphi(z)))^3} \\
&=D_{u,\varphi}D_{u,\varphi}^{\ast}K_{w}(z)
\end{align*}
for all $z,w\in \mathbb{D}$. Hence, $D_{u,\varphi}^{\ast}D_{u,\varphi}=D_{u,\varphi}D_{u,\varphi}^{\ast}$ which in turn implies that $D_{u,\varphi}$ is normal.
\end{proof}

In the above theorem, if we choose $a=i, b=1+i, c=1-i$ and $d=8i$, then the operator $D_{u,\varphi}$ is a normal operator.
However,  $D_{u,\varphi}$ is not a self-adjoint operator.

A bounded linear operator $T$ in a Hardy space is called a self-adjoint  if $T^{\ast}=T$.  In the following theorem, we study when
the weighted composition differentiation operator is self-adjoint.

\begin{theorem}
Let $u(z)$  be a nonzero analytic function in  $H^2$ and $\varphi(z)$ be analytic self-map  on $\mathbb{D}$ such that
$\|\varphi\|_{\infty}<1$. Then $D_{u,\varphi}$ is self-adjoint if and only if
$$u(z)=\frac{az}{(1-bz)^2}~\mbox{ and }~\varphi(z)=b+\frac{cz}{1-bz} ~\mbox{ for all $z\in \mathbb{D}$},
$$
where $a,b,c\in \mathbb{R}$.
\end{theorem}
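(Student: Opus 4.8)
The plan is to characterize self-adjointness through the reproducing kernels, exactly as in the proof of the first theorem. Since $\{K_w : w\in\mathbb D\}$ spans a dense subspace of $H^2$ and both $D_{u,\varphi}$ and its adjoint are bounded, the identity $D_{u,\varphi}=D_{u,\varphi}^{\ast}$ is equivalent to $D_{u,\varphi}K_w=D_{u,\varphi}^{\ast}K_w$ for every $w$. Using $D_{u,\varphi}^{\ast}K_w=\overline{u(w)}K^{(1)}_{\varphi(w)}$ together with $D_{u,\varphi}K_w(z)=\overline{w}\,u(z)/(1-\overline{w}\varphi(z))^2$, this reduces to the scalar identity
\[
\frac{\overline{w}\,u(z)}{(1-\overline{w}\varphi(z))^2}=\frac{\overline{u(w)}\,z}{(1-\overline{\varphi(w)}z)^2},\qquad z,w\in\mathbb D .
\]
Setting $w=0$ immediately gives $u(0)=0$, the first structural constraint.

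The next step is to remove the anti-holomorphic dependence on $w$. Writing $v=\overline{w}$ and introducing the reflected functions $\widetilde u(v)=\overline{u(\overline v)}$ and $\widetilde\varphi(v)=\overline{\varphi(\overline v)}$, both holomorphic on $\mathbb D$, the identity turns into a genuinely holomorphic functional equation in $(z,v)$ that is completely parallel to equation \eqref{eq1}. I would then mimic the first theorem verbatim: differentiate in $v$, let $v\to 0$, and use $\widetilde\varphi(0)=\overline{\varphi(0)}$ and $\widetilde u(0)=0$ to solve for $u$, obtaining $u(z)=a\,z/(1-\overline{b}z)^2$ with $b=\varphi(0)$, where matching the coefficient of $z$ forces $a=u'(0)$ to be real. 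Substituting this $u$ back, canceling the common factor $a z v$, taking the principal square root, differentiating once more in $v$ at $v=0$, and evaluating $\varphi'(0)$ then yields $\varphi(z)=\big(b+(c-|b|^2)z\big)/(1-\overline{b}z)$ with $c=\varphi'(0)$ and forces $c\in\mathbb R$.

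The main obstacle, and the step I would scrutinize most carefully, is upgrading these partial conclusions to the asserted real form — in particular, proving that $b=\varphi(0)$ is real so that $1-\overline{b}z$ may be replaced by $1-bz$ and $\varphi$ collapses to $b+cz/(1-bz)$. The holomorphic functional equation obtained above is formally symmetric under the exchange $b\leftrightarrow\overline{b}$, so the reality of $b$ does not drop out of the kernel identity by itself; I would therefore look for an additional scalar constraint coming from self-adjointness — for instance forcing $\langle D_{u,\varphi}f,f\rangle\in\mathbb R$ on a suitable family of test functions, or comparing the two competing factors $1-bz$ and $1-\overline{b}z$ directly in the functional equation — in order to pin down $\arg b$. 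This is exactly where the argument is most delicate, and it is the step I expect to require the most care.

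Finally, the converse is routine. For real $a,b,c$ one has $\overline{b}=b$, and substituting $u(z)=az/(1-bz)^2$ and $\varphi(z)=b+cz/(1-bz)$ into both $D_{u,\varphi}K_w$ and $D_{u,\varphi}^{\ast}K_w$ and simplifying shows that the two resulting denominators coincide, so $D_{u,\varphi}K_w=D_{u,\varphi}^{\ast}K_w$ for all $w$ and hence $D_{u,\varphi}$ is self-adjoint. This is precisely the $\alpha=1$, real-parameter specialization of the direct kernel computation already carried out in the proof of the first theorem, so no new ideas are needed for this direction.
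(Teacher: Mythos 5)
Up to the point you flagged, your argument coincides with the paper's own proof: reduce self-adjointness to the kernel identity, set $w=0$ to get $u(0)=0$, differentiate once (your reflected-function device $\widetilde u(v)=\overline{u(\overline v)}$ is just a cleaner justification of the paper's differentiation in $\overline w$) to obtain $u(z)=az/(1-\overline b z)^2$ with $a\in\mathbb R$, then substitute back and differentiate again to obtain $\varphi(z)=b+cz/(1-\overline b z)$ with $c=\varphi'(0)\in\mathbb R$. The genuine gap in your proposal is exactly the step you singled out: you never produce the ``additional scalar constraint'' forcing $b=\varphi(0)$ to be real, so the necessity direction is not proved. But your suspicion that the kernel identity cannot pin down $\arg b$ is correct in the strongest sense: \emph{no} argument can close this gap, because the necessity direction of the statement is false as written. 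Indeed, take any non-real $w_0\in\mathbb D$, let $\varphi\equiv w_0$ (so $c=0$) and $u=K^{(1)}_{w_0}$, i.e.\ $u(z)=z/(1-\overline{w_0}z)^2$; then $\|\varphi\|_\infty=|w_0|<1$, $u\in H^2\setminus\{0\}$, and
\[
D_{u,\varphi}f=f'(w_0)\,K^{(1)}_{w_0}=\langle f,K^{(1)}_{w_0}\rangle\,K^{(1)}_{w_0},
\]
a rank-one operator of the form $f\mapsto\langle f,h\rangle h$, hence self-adjoint (even positive); yet $b=\varphi(0)=w_0\notin\mathbb R$, so $\varphi$ is not of the stated form $b+cz/(1-bz)$ with $b,c\in\mathbb R$. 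More generally, for all $a,c\in\mathbb R$ and complex $b$ (with $\|\varphi\|_\infty<1$), the pair $u(z)=az/(1-\overline b z)^2$, $\varphi(z)=b+cz/(1-\overline b z)$ satisfies
\[
D_{u,\varphi}K_w(z)=\frac{az\overline w}{\bigl[(1-\overline b z)(1-b\overline w)-cz\overline w\bigr]^2}=D^{\ast}_{u,\varphi}K_w(z),
\]
so every member of this conjugate-adapted family is self-adjoint; the theorem's family is just the slice $b\in\mathbb R$.

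You should also know that the paper's proof breaks down at precisely the same spot. Its differentiation of \eqref{eq5} at $w=0$ actually yields $\varphi(z)=b+\overline{\varphi'(0)}z/(1-\overline b z)$ — with $\overline b$, not $b$, in the denominator — but the paper silently writes $1-bz$, and then disposes of the reality of $b$ and $c$ with the sentence ``similar to the sufficiency part of the proof, we get $b,c\in\mathbb R$.'' That is not a valid deduction: the sufficiency computation only verifies that real parameters give equality of the two kernel expressions, and the display above shows that non-real $b$ gives equality just as well, so self-adjointness cannot force $b\in\mathbb R$. In short, your proposal is incomplete at the step you honestly identified; the paper's proof is incorrect at that same step; and the true output of the method you both use is the larger family with $a,c\in\mathbb R$ and arbitrary complex $b$ (reality of $c=\varphi'(0)$, which your derivation does capture, being the only constraint on $\varphi$ beyond its linear-fractional form).
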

\begin{proof}
If $u(z)=\frac{az}{(1-bz)^2}$ and $\varphi(z)=b+\frac{cz}{1-bz}$, where $a,b,c\in\mathbb{R}$, for all $z\in\mathbb{D}$, then we have
\begin{align*}
D^{\ast}_{u,\varphi}K_{w}(z)&=D^{\ast}_{u,\varphi}\Big(\frac{1}{1-\overline{w}z} \Big)
=\frac{z\overline{u(w)}}{(1-z\overline{\varphi(w)})^2}
=\frac{\overline{a}z\overline{w}}{(1-\overline{b}\overline{w}-\overline{b}z+z\overline{w}\overline{b}^2-\overline{c}z\overline{w})^2}
\end{align*}
and
\begin{align*}
D_{u,\varphi}K_{w}(z)&=D_{u,\varphi}(\frac{1}{1-\overline{w}z})
=\frac{\overline{w}u(z)}{(1-\overline{w}\varphi(z))^2}
=\frac{az\overline{w}}{(1-bz-b\overline{w}+z\overline{w}b^2-cz\overline{w})^{2}},
\end{align*}
where $z,w\in \mathbb{D}$. Hence, $D_{u,\varphi}^{\ast}=D_{u,\varphi}$ which shows that $D_{u,\varphi}$ is self-adjoint.

 Conversely, we suppose that $D_{u,\varphi}$ is self-adjoint. Then
$D_{u,\varphi}^{*}=D_{u,\varphi}$.
This implies that $D_{u,\varphi}^{\ast}K_{w}(z)=D_{u,\varphi}K_{w}(z)$ for $w,z\in\mathbb{D}$.
Notice that
$$D_{u,\varphi}^{*}K_{w}(z)=\frac{\overline{u(w)}z}{(1-z\overline{\varphi(w)})^2} ~\mbox{ and }~
D_{u,\varphi}K_{w}(z)=\frac{u(z)\overline{w}}{\left(1-\overline{w}\varphi(z)\right)^2}.
$$
Hence,
\begin{equation}\label{eq2}
\frac{\overline{u(w)}z}{(1-z\overline{\varphi(w)})^2}=\frac{u(z)\overline{w}}{\left(1-\overline{w}\varphi(z)\right)^2}
\end{equation}
for $z,w\in \mathbb{D}$.
With $w=0$, we show that $u(0)=0$.

For $u\in H^2$, we let  $u(z)=\Sigma^{\infty}_{n=1}a_{n}z^n$ with $a_n\in \mathbb{C}$. Substitute $u(z)$ back into \eqref{eq2} to obtain
\begin{equation}\label{eq3}
\Big (\sum^{\infty}_{n=1}\overline{a_n}z\overline{w}^n\Big )(1-\overline{w}\varphi(z))^2= \Big (\sum^{\infty}_{n=1}a_n\overline{w}z^n\Big )(1-z\overline{\varphi(w)})^2
\end{equation}
for all $z,w\in \mathbb{D}$.

Now, differentiating the equation \eqref{eq3} with respect to $\overline{w}$, we obtain
\begin{align*}
\Big (\sum^{\infty}_{n=1}&n\overline{a_n}z\overline{w}^{n-1}\Big )(1-\overline{w}\varphi(z))^2
-2\varphi(z)\Big(\sum^{\infty}_{n=1}\overline{a_n}z\overline{w}^n\Big)(1-\overline{w}\varphi(z))\\
&= \Big(\sum^{\infty}_{n=1}a_nz^n\Big)(1-z\overline{\varphi(w)})^2-2z\overline{\varphi'(w)}\Big(\sum^{\infty}_{n=1}a_n\overline{w}z^n\Big)(1-z\overline{\varphi(w)}).
\end{align*}
Set $w=0$. Then
$\overline{a_{1}}z=(1-\overline{\varphi(0)}z)^2u(z)
$
so that
\begin{equation}\label{eq4}
 u(z)=\frac{\overline{a_1}z}{(1-\overline{\varphi(0)}z)^2}=\frac{\overline{a}z}{(1-\overline{b}z)^2},
\end{equation}
with $a=a_1$ and $b=\varphi(0)$.

Substituting \eqref{eq4} back into \eqref{eq2} yields that
\begin{equation}\label{eq5}
a(1-\overline{b}z)^2(1-\overline{w}\varphi(z))^2=\overline{a}(1-b\overline{w})^2(1-z\overline{\varphi(w)})^2,
\end{equation}
where $z\in\mathbb{D}$. Setting $w=0$ shows that $\overline{a}=a$,  i.e., $a\in\mathbb{R}$.

Differentiating the equation \eqref{eq5} with respect to $\overline{w}$ gives
\begin{align*}
&(1-\overline{b}z)^2(1-\overline{w}\varphi(z))(-2a\varphi(z))  \\
&\ \ =(1-z\overline{\varphi(w)})^2(-2ab)(1-b\overline{w})+(1-b\overline{w})^2(1-z\overline{\varphi(w)})(-2az\overline{\varphi'(w)})
\end{align*}
for all $z\in \mathbb{D}$.
If $w=0$, then we have
$$\varphi(z)=b+\frac{cz}{1-bz},~ z\in \mathbb{D},
$$
where $b= \varphi(0)$   and $c=\overline{\varphi'(0)}$.

Now the fact that $D_{u,\varphi}$ is self-adjoint implies that
$$D^{\ast}_{u,\varphi}K_{w}(z)=D_{u,\varphi}K_{w}(z).
$$
Similar to the sufficiency part of the proof of the theorem, we get $b, c\in\mathbb{R}$.
\end{proof}

Recall that a bounded linear operator $T$ on $H^2$ is unitary if and only if
$$TT^{\ast}=T^{\ast}T=I.
$$
In the above theorem, we considered conditions such that the operator $D_{u,\varphi}$  is self-adjoint.
Clearly, $D_{u,\varphi}$ is also normal.  But it is not unitary. We have $D_{u,\varphi}^{\ast}D_{u,\varphi}\neq I$ by {\color{red} a} simple calculation.
Let $u=az$ and $\varphi=cz$,   where  $a,c\in \mathbb{R}$. If $|a|<\infty$ and $0<c<1$, then $D_{u,\varphi}$ is self-adjoint.
In this case, if we can find the spectrum of the operator $D_{u,\varphi}$,  we will obtain the norm of $D_{u,\varphi}$.

\begin{theorem}
Assume that $u(z)=az$ and $\varphi(z)=cz$, where  $a\in \IR$ and $c\in (0,1)$. Then
$$\|D_{u,\varphi}\|=akc^k,
$$
where  $k=\floor{\frac{1}{1-c}}$, and  $\floor{\, \cdot \, }$  denotes the greatest integer function.
\end{theorem}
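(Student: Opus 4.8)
The plan is to diagonalize $D_{u,\varphi}$ against the standard orthonormal basis of $H^2$ and then read off the operator norm as the largest eigenvalue in modulus. Recall that the monomials $e_n(z)=z^n$, $n\ge 0$, form an orthonormal basis of $H^2$. With $u(z)=az$ and $\varphi(z)=cz$, applying $D_{u,\varphi}$ to $e_n$ and taking into account that the differentiation is performed along $\varphi$ — so that the chain rule contributes the factor $\varphi'(z)=c$ — a direct computation shows that every monomial is an eigenvector,
$$D_{u,\varphi}z^{n}=a\,n\,c^{\,n}\,z^{n},\qquad n\ge 0 ,$$
with $\lambda_{0}=0$. Thus $D_{u,\varphi}$ is a diagonal operator with eigenvalues $\lambda_{n}=a\,n\,c^{\,n}$. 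Since $0<c<1$ gives $nc^{n}\to 0$, the eigenvalue sequence is bounded, so $D_{u,\varphi}$ is bounded; this is precisely the self-adjoint situation treated above (take $b=0$ with $a,c\in\IR$).

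For a diagonal operator the norm equals the supremum of the moduli of the diagonal entries, so the problem collapses to the discrete optimization
$$\|D_{u,\varphi}\|=\sup_{n\ge 0}|\lambda_{n}|=|a|\,\sup_{n\ge 1} n\,c^{\,n}.$$
To locate the maximizer I would argue by monotonicity. Setting $f(n)=nc^{n}$, the ratio $f(n+1)/f(n)=c(n+1)/n$ satisfies $f(n+1)\ge f(n)$ exactly when $n\le c/(1-c)$, so $f$ increases and then decreases. Hence its peak is attained at the integer $n^{\ast}$ with $c/(1-c)\le n^{\ast}\le c/(1-c)+1=1/(1-c)$; because this interval has length one it contains the single integer $n^{\ast}=\floor{1/(1-c)}=k$. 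Evaluating yields $\sup_{n}f(n)=f(k)=k\,c^{\,k}$, and therefore $\|D_{u,\varphi}\|=|a|\,k\,c^{\,k}=a\,k\,c^{\,k}$ when $a>0$, as asserted.

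The diagonalization and the reduction of the norm to $\sup_{n}|\lambda_{n}|$ are routine. The step I expect to be the main obstacle is the discrete maximization: confirming that the peak index is exactly $\floor{1/(1-c)}$ and dealing with the borderline configuration where $c/(1-c)$ (equivalently $1/(1-c)$) is an integer, in which two consecutive eigenvalues coincide. In that degenerate case one must check that $f(k)$ is still one of the maximal values, so that the closed form $k\,c^{\,k}$, and hence $\|D_{u,\varphi}\|=a\,k\,c^{\,k}$, remains valid.
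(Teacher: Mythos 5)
Your proposal is correct in substance but takes a genuinely shorter route than the paper. You note that the monomials form an orthonormal basis of eigenvectors, so $D_{u,\varphi}$ is a diagonal operator and its norm is simply $\sup_{n}|\lambda_n|$; the problem then reduces to the discrete maximization of $nc^{n}$, which you settle by the ratio test $f(n+1)/f(n)=c(n+1)/n$, including the tie case when $1/(1-c)$ is an integer. The paper instead verifies the same eigenvalue formula, shows by repeated differentiation of the eigenvalue equation $\lambda f(z)=aczf'(cz)$ that \emph{every} eigenvalue has the form $(n-1)ac^{n-1}$, invokes compactness to identify the spectrum as $\{0\}\cup\{(n-1)ac^{n-1}:n\in\mathbb{N}\}$, and then takes the norm to be the maximum of the spectrum --- a step that tacitly uses self-adjointness (norm equals spectral radius), available from the preceding theorem with $b=0$. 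Your diagonal-operator argument avoids both the compactness argument and the spectral-radius identity, gives boundedness for free, and is more careful than the paper on two points: the sign of $a$ (for $a<0$ the norm is $|a|kc^{k}$, so the stated formula requires $a>0$) and the borderline configuration where two consecutive eigenvalues coincide. The final optimization is the same in both proofs: the paper's condition $n\le \frac{1}{1-c}+1$ is exactly your $n\le c/(1-c)$ shifted by one.

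One caveat concerns your eigenvalue computation. By the paper's definition $(D_{u,\varphi}f)(z)=u(z)f'(\varphi(z))$, no chain-rule factor $\varphi'(z)=c$ arises --- $f'$ is merely \emph{evaluated} at $\varphi(z)$ --- so a literal computation gives $D_{u,\varphi}z^{n}=az\cdot n(cz)^{n-1}=anc^{n-1}z^{n}$, not $anc^{n}z^{n}$. Your inserted factor of $c$ happens to reproduce the paper's own computation, which commits the identical slip (it writes $D_{u,\varphi}(z^{n-1})=acz(n-1)(cz)^{n-2}$ and $\lambda f(z)=aczf'(cz)$, as if the operator were $u\cdot(f\circ\varphi)'$), so your eigenvalues agree with the paper's and with the stated formula $akc^{k}$. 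Under the definition as printed, however, the same peak index $k=\floor{\frac{1}{1-c}}$ arises (the ratio $c(n+1)/n$ is unchanged) and the norm would be $akc^{k-1}$; the extra power of $c$ is an artifact shared by your argument, the paper's proof, and the theorem statement, so you should either justify the chain-rule factor from the definition you are actually using or adjust the exponent.
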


\begin{proof}
For $u(z)=az$ and $\varphi(z)=cz$, we have
$$D_{u,\varphi}(z^{n-1})=acz(n-1)(cz)^{n-2}=(n-1)ac^{n-1}z^{n-1}
$$
for all $n\in\mathbb{N}=\{1,2, \ldots \}$.  We see that  $\{(n-1)ac^{n-1}:\, n\in \mathbb{N}\} $ belongs to the spectrum of $D_{u,\varphi}$.
Next we let $\lambda$ be an arbitrary eigenvalue of $D_{u,\varphi}$ with   the corresponding eigenvector $f$.
Then
\begin{equation}\label{eq6}
\lambda f(z)=aczf'(cz).
\end{equation}
If $f(0)\neq0$, then $\lambda=0$. If $f(0)=0$,  then  differentiate the  equation \eqref{eq6} with respect to $z$ to obtain
\begin{equation}\label{eq7}
\lambda f'(z)=acf'(cz)+ac^2zf''(cz).
\end{equation}
If $f'(0)\neq0$, then $\lambda=ac$. If $f''(0)=0$, then we differentiate both sides of the equation \eqref{eq7} to get
\begin{equation}\label{eq8}
\lambda f''(z)=2ac^2f''(cz)+ac^3zf'''(cz).
\end{equation}
If $f''(0)\neq 0$, then $\lambda=2ac^2$. Also, $(n-1)$-times differentiation of  the equation \eqref{eq6} gives
$$\lambda f^{(n-1)}(z)=(n-1)ac^{n-1}f^{(n-1)}(cz)+ac^nzf^{(n)}(cz).
$$
If $f^{(n-1)}(0)\neq0$, then $\lambda=(n-1)ac^{(n-1)}$. Therefore, any eigenvalue can be represented in this form $(n-1)ac^{(n-1)}$ with $n\in \mathbb{N}$. Since $D_{u,\varphi}$ is compact,  the spectrum of $D_{u,\varphi}$ is precisely  $\{0\}\bigcup \{(n-1)ac^{(n-1)}: n\in \mathbb{N}\}.$
Hence
$$\|D_{u,\varphi}\|=\max\{(n-1)ac^{(n-1)}: n\in \mathbb{N}\}.
$$
Let $g(x)=xc^{x}$. It  can be found that $g(x)$ has maximum in $[0,\infty)$. To obtain the maximum, we need to find that the greatest natural number $n$ such that
$$(n-2)c^{(n-2)}\leq(n-1)c^{n-1}, ~\mbox{ i.e., }~ n\leq\frac{1}{1-c}+1.
$$
This gives $\max\{(n-1)ac^{(n-1)}\}=akc^k $,  where  $k=\floor{\frac{1}{1-c}}.$
\end{proof}

\vspace{0.3cm}
{\bf Data Availability Statement: } My manuscript has no associated data.
\vspace{0.3cm}

\end{document}